\newtheorem*{nthm*}{Theorem 2.1}
\newtheorem{thm}{Theorem}[section]
\newtheorem{lem}[thm]{Lemma}
\newtheorem{cor}[thm]{Corollary}
\title[Probability of $k$-wise Relatively $r$-prime Ideals]{The Probability that Ideals in a Number Ring are $k$-wise Relatively $r$-prime}
\author{Brian D. Sittinger and Ryan D. DeMoss}
\begin{document}

\vspace*{-1.5cm}

\begin{abstract}

\noindent We say that $n$ nonzero ideals of algebraic integers in a fixed number ring are $k$-wise relatively $r$-prime if any $k$ of them are relatively $r$-prime. In this article, we provide an exact formula for the probability that $n$ nonzero ideals of algebraic integers in a fixed number ring are $k$-wise relatively $r$-prime.

\end{abstract}

\maketitle

\normalsize

\section{Introduction}  \label{sect1}

A rather intriguing result in number theory is that the probability that two random positive integers are relatively prime is ${6}/{\pi^2}$. More generally, the probability that $n$ positive integers are relatively prime is ${1}/{\zeta(n)}$; see \cite{Nymann} for instance. In 1975, Benkoski \cite{Benkoski} further generalized Lehmer's work, showing that the probability that $n$ positive integers are relatively $r$-prime (that is, these integers have no common $r$th power prime factor) is ${1}/{\zeta(rn)}$.

Another way to generalize this is to consider pairwise relatively prime integers. In 2002, L\'aszl\'o T\'oth \cite{Toth1} established via recursion that the probability that $n$ positive integers are \textit{pairwise} relatively prime equals
$$\prod_{p \;\text{prime}} \Big(1 - \frac{1}{p}\Big)^{n-1} \Big(1 + \frac{n-1}{p}\Big).$$
In 2012, Jerry Hu \cite{Hu} generalized T\'oth's work by finding a formula for the probability that $n$ positive integers are $k$-wise relatively prime, concluding that this probability equals
$$\prod_{p \;\text{prime}} \sum_{j=0}^{k-1} \binom{n}{j} \Big(1 - \frac{1}{p}\Big)^{n-j} \frac{1}{p^j}.$$
\noindent Subsequently in \cite{Toth2}, not only did T\'oth give a non-recursive proof to Hu's fact, but he also improved the error term that estimated this probability when the positive integers are at most $x$ before letting $x \to \infty$. 

Observing that these results all refer to the set of positive integers, it is natural to ask if one can extend these results to other sets with similar properties. In 2010, Sittinger \cite{Sittinger} extended the result of Benkoski to ideals of algebraic integers in a number field using Nymann's methods of deriving growth estimates. He deduced the probability that $n$ ideals of algebraic integers from a given algebraic number ring $\mathcal{O}_K$ are relatively $r$-prime is $1/\zeta_K(rn)$, where $\zeta_K$ denotes the Dedekind zeta function.  In this article, we adapt the method in \cite{Toth2} to provide a generalization of the results of T\'oth and Hu to rings of algebraic integers.

\section{Statement of the Main Theorem}

Let $K$ be an algebraic number field with associated ring of algebraic integers $\mathcal{O}$. Fixing a positive integer $r$, we say that the nonzero ideals $\mathfrak{a}_1, \dots, \mathfrak{a}_n \subseteq \mathcal{O}$ are \textbf{relatively $r$-prime} if there does not exist a prime ideal $\mathfrak{p} \subseteq \mathcal{O}$ such that $\mathfrak{p}^r \mid \mathfrak{a}_i$ for each $i = 1, 2, \dots, n$. Moreover, for any fixed positive integer $k \leq n$, we say that $\mathfrak{a}_1, \dots, \mathfrak{a}_n$ are \textbf{$k$-wise relatively $r$-prime} if any $k$ of these ideals are relatively $r$-prime. Observe that when $r = 1$, these definitions reduce to nonzero ideals being relatively prime and $k$-wise relatively prime, respectively.

We next state the the main result of this article (which is proved in the following section). To this end, given $n$ nonzero ideals $\mathfrak{a}_1, \dots, \mathfrak{a}_n \subseteq \mathcal{O}$, define the characteristic function
$$\rho_{n,k,r}(\mathfrak{a}_1, \dots, \mathfrak{a}_n) = \begin{cases} 1 & \text{if } \mathfrak{a}_1, \dots, \mathfrak{a}_n \text{ are $k$-wise relatively $r$-prime,}\\ 0 & \text{otherwise.}\end{cases}$$
\noindent Moreover, we use $\mathfrak{N}(\mathfrak{a}) = |\mathcal{O}/\mathfrak{a}|$ to denote the norm of an ideal $\mathfrak{a} \subseteq \mathcal{O}$.

\begin{thm} \label{thm:main}
Fix positive integers $n,k,r$ where $n \geq k \geq 2$, and let $K$ be an algebraic number field of degree $d$ over $\mathbb{Q}$ whose ring of integers is $\mathcal{O}$. Let $\epsilon =  \displaystyle\frac{2}{d+1}$. Then, there exists a constant $c > 0$ only dependent on $K$ such that
$$\sum_{\mathfrak{N}(\mathfrak{a}_1), \dots, \mathfrak{N}(\mathfrak{a}_n) \leq x} \rho_{n,k,r}(\mathfrak{a}_1, \dots, \mathfrak{a}_n) = P_{n,k,r} \cdot (cx)^n + O(R_{n,k,r}(x)),$$
\noindent where $$P_{n,k,r} = \prod_{\mathfrak{p}} \sum_{j=0}^{k-1} \binom{n}{j}
\Big(1 - \frac{1}{\mathfrak{N}(\mathfrak{p}^r)}\Big)^{n-j} \frac{1}{\mathfrak{N}(\mathfrak{p}^r)^j},$$
\noindent and $$R_{n,k,r}(x) = \begin{cases} x^{n-\epsilon} \log^{n-1}{x} & \text{if } k = 2 \text{ and } r = 1,
\\ x^{n-\epsilon} & \text{otherwise}.\end{cases}$$
\end{thm}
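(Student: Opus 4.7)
The plan is to adapt the Möbius-inversion argument of Tóth \cite{Toth2}, substituting the Dedekind zeta function and Landau-type ideal counts for the integer Dirichlet machinery. The starting point is that the condition of being $k$-wise relatively $r$-prime is local: $\rho_{n,k,r}(\mathfrak{a}_1,\ldots,\mathfrak{a}_n) = 1$ exactly when, at every prime $\mathfrak{p}$, fewer than $k$ of the valuations $v_\mathfrak{p}(\mathfrak{a}_i)$ are at least $r$. Accordingly $\rho_{n,k,r}$ factors as $\prod_\mathfrak{p} \rho_{n,k,r}^{(\mathfrak{p})}$, and Möbius inversion on the componentwise divisibility lattice of ideal $n$-tuples produces a multiplicative function $h$ such that
\[
\rho_{n,k,r}(\mathfrak{a}_1, \ldots, \mathfrak{a}_n) \;=\; \sum_{\mathfrak{d}_i \mid \mathfrak{a}_i \text{ for all } i} h(\mathfrak{d}_1, \ldots, \mathfrak{d}_n).
\]
A direct computation at a single prime shows each local factor $h_\mathfrak{p}$ vanishes unless every $v_\mathfrak{p}(\mathfrak{d}_i) \in \{0, r\}$; its value then depends only on the number $m$ of indices picking up $\mathfrak{p}^r$, vanishes for $1 \le m \le k-1$, and is otherwise a finite alternating binomial sum in $m$ and $k$.

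Next I would interchange summation and apply Landau's ideal-counting estimate
\[
\#\{\mathfrak{a} : \mathfrak{d} \mid \mathfrak{a},\ \mathfrak{N}(\mathfrak{a}) \le x\} \;=\; \frac{cx}{\mathfrak{N}(\mathfrak{d})} + O\!\left(\!\left(\tfrac{x}{\mathfrak{N}(\mathfrak{d})}\right)^{1-\tau}\right),
\]
where $\tau = 1/[K:\mathbb{Q}]$ and $c$ is the residue of $\zeta_K$ at $s = 1$. Substituting and expanding the product of main and error contributions across the $n$ variables yields a single main term
\[
c^n x^n \sum_{\vec{\mathfrak{d}}} \frac{h(\vec{\mathfrak{d}})}{\prod_i \mathfrak{N}(\mathfrak{d}_i)} \;=\; c^n x^n \prod_\mathfrak{p} \sum_{\vec{e}} \frac{h_\mathfrak{p}(\vec{e})}{\mathfrak{N}(\mathfrak{p})^{\sum_i e_i}},
\]
plus $2^n - 1$ error convolutions. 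A routine binomial manipulation identifies the local Euler factor with
\[
\sum_{j=0}^{k-1} \binom{n}{j}\!\left(1 - \tfrac{1}{\mathfrak{N}(\mathfrak{p}^r)}\right)^{n-j}\!\frac{1}{\mathfrak{N}(\mathfrak{p}^r)^j},
\]
so the main term equals $P_{n,k,r}\cdot(cx)^n$; convergence of the Euler product follows because each local factor differs from $1$ by $O(\mathfrak{N}(\mathfrak{p})^{-rk})$, which is summable since $rk \ge 2$.

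The main obstacle is the error analysis. Each of the $2^n - 1$ error pieces is a convolution of $h$ against Landau remainders, which reduces to tail sums of the form $\sum \mathfrak{N}(\mathfrak{d})^{-s}$ controlled by the Dirichlet series of $\zeta_K$. Provided the relevant exponent lies strictly to the right of the pole at $s=1$, these sums converge absolutely and deliver the clean bound $x^{n-\epsilon}$ for some small $\epsilon > 0$ depending on $\tau$. The boundary case $k=2$, $r=1$ is the delicate one: there the governing exponent sits exactly at $s=1$, where $\zeta_K$ has a simple pole, so the relevant tails only diverge logarithmically. Handling this case will require Abel summation applied successively in each of the $n$ variables, together with the Landau growth $\sum_{\mathfrak{N}(\mathfrak{a}) \le x} 1/\mathfrak{N}(\mathfrak{a}) = c\log x + O(1)$; iterating picks up exactly the extra $\log^{n-1} x$ factor appearing in $R_{n,k,r}(x)$.
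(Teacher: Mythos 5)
Your plan follows essentially the same route as the paper: M\"obius inversion producing a multiplicative correction function (the paper's $\psi_{n,k,r}$, your $h$), the local computation showing it is supported on exponents in $\{0,r\}$ and vanishes when $1 \le m \le k-1$ indices carry $\mathfrak{p}^r$, Landau's ideal count, and a binomial identity matching the resulting Euler factor with $\sum_{j=0}^{k-1}\binom{n}{j}(1-\mathfrak{N}(\mathfrak{p}^r)^{-1})^{n-j}\mathfrak{N}(\mathfrak{p}^r)^{-j}$. All of that is correct, as is your convergence criterion and your identification of $k=2$, $r=1$ as the boundary case.

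The gap is in the error analysis, which is where nearly all of the work lies. Two concrete points. First, after substituting Landau's estimate the main term is $(cx)^n$ times the \emph{truncated} sum $\sum_{\mathfrak{N}(\mathfrak{d}_i)\le x}\psi_{n,k,r}(\mathfrak{d}_1,\dots,\mathfrak{d}_n)/\prod_i\mathfrak{N}(\mathfrak{d}_i)$, not the unrestricted series you write down. Replacing it by the full Dirichlet series $D_{n,k,r}(1,\dots,1)=P_{n,k,r}$ costs tail terms indexed by the nonempty subsets $I$ of variables with $\mathfrak{N}(\mathfrak{d}_i)>x$, and for $k=2$ these series do \emph{not} converge absolutely once even one or two constraints are dropped; the paper must argue separately for $|I|\ge 3$ (shifting exponents by $\delta$), $|I|=1$, and $|I|=2$ (splitting at $\mathfrak{N}(\mathfrak{d}_1)\le x^{3/2}$), each time exploiting the support of $\psi$: a prime dividing only one $\mathfrak{d}_i$ forces $\psi_{n,2,r}=0$, so only primes of norm at most $x$ (or $x^{3/2}$) contribute. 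None of this appears in your outline. Second, both the Landau-remainder pieces and these tails reduce to bounding $\sum_{\mathfrak{N}(\mathfrak{d}_i)\le x}|\psi_{n,2,1}|/(\mathfrak{N}(\mathfrak{d}_1)\cdots\mathfrak{N}(\mathfrak{d}_{n-1}))$, and the $\log^{n-1}x$ does not come from Abel summation against $\sum_{\mathfrak{N}(\mathfrak{a})\le x}1/\mathfrak{N}(\mathfrak{a})$ --- the summand is $|\psi|$, supported on tuples sharing prime factors, not the constant $1$ --- but from majorizing by the truncated Euler product $\prod_{\mathfrak{N}(\mathfrak{p})\le x}\bigl(1+(n-1)/\mathfrak{N}(\mathfrak{p})+\cdots\bigr)$ and invoking Mertens' theorem for number fields. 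Your Abel-summation step would have to be developed into something equivalent before the $\log^{n-1}x$ in $R_{n,2,1}(x)$ is actually established.
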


\noindent \textbf{Remark:} The values of $c$ and $\epsilon$ in Theorem 2.1 come from the following classic result of Landau \cite{Landau} that estimates $H(x)$, the number of ideals in $\mathcal{O}$ with norm at most $x$.

\begin{lem} \label{thm:Ideals}
Let $[K:\mathbb{Q}] = d = s + 2t$, where $s$ and $t$ respectively denote the number of real and pairs of complex embeddings of $K$, and let $g$ denote the number of roots of unity in $\mathcal{O}$. Then, we have
$$H(x) = cx + O(x^{1-\epsilon}),$$
where $c = \displaystyle\frac{2^{s+t} \pi^t hR}{g \sqrt{|\Delta|}}$, and $\epsilon =  \displaystyle\frac{2}{d+1}$.
\end{lem}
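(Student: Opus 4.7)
The plan is to decompose $H(x)$ by ideal class, translate each class-count into a lattice-point count via the Minkowski embedding, extract the main term as a volume, and invoke Landau's refined boundary estimate for the error.

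First, I would write $H(x) = \sum_{C \in \mathrm{Cl}(\mathcal{O})} H_C(x)$ with $H_C(x) = \#\{\mathfrak{a} \in C : \mathfrak{N}(\mathfrak{a}) \leq x\}$. Fixing an integral representative $\mathfrak{b}_C$ of the inverse class $C^{-1}$, the map $\mathfrak{a} \mapsto \mathfrak{a}\mathfrak{b}_C = (\alpha)$ is a bijection from ideals in $C$ of norm at most $x$ to principal ideals $(\alpha) \subseteq \mathfrak{b}_C$ with $|\mathrm{N}(\alpha)| \leq x\mathfrak{N}(\mathfrak{b}_C)$. The Minkowski embedding $K \hookrightarrow \mathbb{R}^s \times \mathbb{C}^t \cong \mathbb{R}^d$ realizes $\mathfrak{b}_C$ as a full-rank lattice $\Lambda_C \subset \mathbb{R}^d$ of covolume $\mathfrak{N}(\mathfrak{b}_C)\cdot 2^{-t}\sqrt{|\Delta|}$, and the logarithmic embedding together with Dirichlet's unit theorem gives a homogeneous fundamental domain $\mathcal{F}$ (satisfying $t\mathcal{F} = \mathcal{F}$ for all $t > 0$) for the action of $\mathcal{O}^\times$ modulo its torsion. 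Consequently
$$H_C(x) = \frac{1}{g}\#\{\alpha \in \Lambda_C \cap \mathcal{F} : 0 < |\mathrm{N}(\alpha)| \leq x\,\mathfrak{N}(\mathfrak{b}_C)\}.$$

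Second, set $\mathcal{D}(y) := \{v \in \mathcal{F} : |\mathrm{N}(v)| \leq y\}$. A standard volume computation in the logarithmic coordinates yields $\mathrm{vol}(\mathcal{D}(y)) = 2^s (2\pi)^t R \cdot y$, with the regulator $R$ appearing as the covolume of the unit-logarithm lattice inside the trace-zero hyperplane. Dividing by $\mathrm{covol}(\Lambda_C)$ and by $g$ produces a class-independent main term $\frac{2^{s+t}\pi^t R}{g\sqrt{|\Delta|}}\cdot x$ for each $H_C(x)$; summing over the $h$ ideal classes recovers $cx$ with the stated constant.

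Third, the error reduces to the lattice-point discrepancy $\#(\Lambda_C \cap \mathcal{D}(y)) - \mathrm{vol}(\mathcal{D}(y))/\mathrm{covol}(\Lambda_C)$. By cone homogeneity $\mathcal{D}(y) = y^{1/d}\,\mathcal{D}(1)$, so the naive Lipschitz-boundary bound gives $O(y^{(d-1)/d}) = O(x^{1-1/d})$, which is not sharp enough. Obtaining the exponent $\epsilon = 2/(d+1)$ is the main obstacle and the substantive analytic content of the lemma: it requires either Landau's refined geometric argument (dyadic decomposition of $\mathcal{F}$ into norm-shells combined with a truncation of the directions where $|\mathrm{N}|$ is small, balancing the boundary and interior contributions) or, equivalently, a Perron-integral contour shift for $\zeta_K(s)$ past $\mathrm{Re}(s) = 1 - 2/(d+1)$ using the Phragm\'en--Lindel\"of convexity bound for $\zeta_K$ in the critical strip. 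Since this is precisely the content of \cite{Landau}, I would simply invoke it rather than reproduce the derivation.
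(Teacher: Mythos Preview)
The paper does not prove this lemma at all: it is stated as a classical result and attributed directly to Landau \cite{Landau}, with no argument given. Your concluding sentence---that since the sharp error exponent is precisely the content of \cite{Landau} you would simply invoke it---is exactly what the paper does, so in that sense your proposal and the paper agree.

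Your sketch of the classical proof is sound in outline: the class-by-class decomposition, the bijection with lattice points in a cone fundamental domain via the Minkowski and logarithmic embeddings, and the identification of the main term as a volume divided by a covolume are all standard and correct. You also correctly isolate the genuine difficulty, namely that the Lipschitz-boundary bound only gives $O(x^{1-1/d})$ and that reaching $\epsilon = 2/(d+1)$ requires Landau's refined argument (or the equivalent contour-shift with convexity bounds for $\zeta_K$). One small caution: your stated volume $\mathrm{vol}(\mathcal{D}(y)) = 2^s(2\pi)^t R\,y$ combined with your covolume $\mathfrak{N}(\mathfrak{b}_C)\,2^{-t}\sqrt{|\Delta|}$ produces a leading constant $2^{s+2t}\pi^t R/(g\sqrt{|\Delta|})$, off by a factor of $2^t$ from the stated $c$; this is a normalization mismatch in the Minkowski embedding rather than a conceptual error, but worth tracking if you ever write out the details.
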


\noindent As usual, $h$, $R$, and $\Delta$ refer to the class number, regulator, and discriminant of the number field $K$, respectively.

As a consequence of this theorem, we readily have the following probability formula that is the desired number ring generalization of the results of T\'oth and Hu that were proved exclusively in the context of the set of integers.

\begin{cor}
Fix positive integers $n,k,r$ where $n \geq k \geq 2$. Then, the probability that $n$ nonzero ideals of algebraic integers in $\mathcal{O}$ are $k$-wise relatively $r$-prime equals
$$P_{n,k,r} = \prod_{\mathfrak{p}} \sum_{j=0}^{k-1} \binom{n}{j}
\Big(1 - \frac{1}{\mathfrak{N}(\mathfrak{p}^r)}\Big)^{n-j} \frac{1}{\mathfrak{N}(\mathfrak{p}^r)^j}.$$
\end{cor}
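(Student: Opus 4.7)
The plan is to adapt the Dirichlet convolution argument of \cite{Toth2} to the ideal setting, substituting Landau's asymptotic $H(y)=cy+O(y^{1-\epsilon})$ from Lemma \ref{thm:Ideals} for the integer estimate $\lfloor y\rfloor=y+O(1)$. The starting point is an unfolding of $\rho_{n,k,r}$ as a Dirichlet convolution. Since $k$-wise relative $r$-primality is a condition that can be checked one prime at a time, we have $\rho_{n,k,r} = \prod_{\mathfrak{p}}\chi_\mathfrak{p}$, where $\chi_\mathfrak{p}(\mathfrak{a}_1,\dots,\mathfrak{a}_n)=1$ precisely when at most $k-1$ of the $\mathfrak{a}_i$ are divisible by $\mathfrak{p}^r$. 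Applying the ideal M\"obius inversion coordinate-by-coordinate to each local factor yields
$$\rho_{n,k,r}(\mathfrak{a}_1,\dots,\mathfrak{a}_n) \;=\; \sum_{\mathfrak{d}_1\mid\mathfrak{a}_1,\dots,\mathfrak{d}_n\mid\mathfrak{a}_n} f(\mathfrak{d}_1,\dots,\mathfrak{d}_n),$$
with $f=\prod_{\mathfrak{p}}g_\mathfrak{p}$ multiplicative, supported on tuples $(\mathfrak{p}^{e_1},\dots,\mathfrak{p}^{e_n})$ for which every $e_i\in\{0,r\}$ and $t:=\#\{i:e_i=r\}$ is either $0$ or at least $k$; on that support $g_\mathfrak{p}$ evaluates to $1$ when $t=0$ and to $(-1)^{t-k+1}\binom{t-1}{k-1}$ when $t\ge k$.

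Next I would interchange orders of summation and substitute Landau's estimate into each of the $n$ inner sums, obtaining
$$\sum_{\mathfrak{N}(\mathfrak{a}_i)\le x}\rho_{n,k,r}(\mathfrak{a}_1,\dots,\mathfrak{a}_n) \;=\; \sum_{(\mathfrak{d}_i)}f(\mathfrak{d}_1,\dots,\mathfrak{d}_n)\prod_{i=1}^{n}\Big(\frac{cx}{\mathfrak{N}(\mathfrak{d}_i)}+O\Big(\Big(\frac{x}{\mathfrak{N}(\mathfrak{d}_i)}\Big)^{1-\epsilon}\Big)\Big).$$
Expanding the $n$-fold product splits the right-hand side into a main term $(cx)^n D(1,\dots,1)$, where $D(s_1,\dots,s_n):=\sum f(\mathfrak{d}_1,\dots,\mathfrak{d}_n)\prod_i\mathfrak{N}(\mathfrak{d}_i)^{-s_i}$, plus $2^n-1$ cross-terms. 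A local Euler-factor computation using the explicit formula for $g_\mathfrak{p}$, together with the binomial identity $\sum_{j=0}^{k-1}\binom{n}{j}u^j(1-u)^{n-j}$ applied with $u=\mathfrak{N}(\mathfrak{p}^r)^{-1}$, identifies $D(1,\dots,1)$ with $P_{n,k,r}$, so the main term matches the statement.

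The main obstacle is the estimation of the cross-terms. A contribution indexed by a non-empty error set $S\subseteq[n]$ is bounded by $x^{n-|S|\epsilon}$ times a truncated sum of $|f|$ carrying the weight $\mathfrak{N}(\mathfrak{d}_i)^{-1}$ for $i\notin S$ and $\mathfrak{N}(\mathfrak{d}_i)^{-(1-\epsilon)}$ for $i\in S$. Because the support of $f$ forces each prime to appear at the $r$-th power in at least $k$ of the $\mathfrak{d}_i$, the leading local Euler factor of the associated absolute-value series decays like $\mathfrak{N}(\mathfrak{p})^{-rk+O(\epsilon)}$; when either $r\ge 2$ or $k\ge 3$, i.e.\ when $rk\ge 3$, this lies comfortably in the region of absolute convergence and one obtains the clean bound $O(x^{n-\epsilon})$ for every cross-term. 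The remaining boundary case $(k,r)=(2,1)$ lies at the edge of this region: here the truncated sums must be bounded via Abel summation against the ideal-counting estimate $\sum_{\mathfrak{N}(\mathfrak{a})\le y}1/\mathfrak{N}(\mathfrak{a})=c\log y+O(1)$, and one accumulates up to one extra factor of $\log x$ per error coordinate, yielding the $\log^{n-1}x$ in $R_{n,k,r}(x)$. Assembling the main term with these error bounds proves the theorem.
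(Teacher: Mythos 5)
Your outline is, in substance, the paper's own route to this result: your $f$ is exactly the paper's $\psi_{n,k,r}$ and your description of its support and values is Lemma \ref{thm:psi}; the substitution of Landau's estimate, the Euler-product identification of $D(1,\dots,1)$ with $P_{n,k,r}$ via the binomial identity, and the Mertens-type bound in the boundary case $(k,r)=(2,1)$ are Lemmas \ref{thm:convergence}, \ref{thm:binomial}, and the $Q_{n,2,1}$ estimate, respectively. Note, however, that the statement you were asked to prove is the \emph{Corollary}, whose entire proof in the paper is the one step you never write down: divide the counting asymptotic by $H(x)^n=(cx+O(x^{1-\epsilon}))^n$ and let $x\to\infty$. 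That step should appear explicitly.

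There are two genuine gaps in the counting argument itself. First, after inserting $H(x/\mathfrak{N}(\mathfrak{d}_i))$ the sum over the $\mathfrak{d}_i$ is truncated to $\mathfrak{N}(\mathfrak{d}_i)\le x$, so your ``main term'' is $(cx)^n$ times a \emph{partial} sum, not $(cx)^n D(1,\dots,1)$. For the Corollary this is harmless, since $D(1,\dots,1)$ converges absolutely (Lemma \ref{thm:convergence} with all $s_i=1$, using $k\ge 2 > 1/r$) and the tail is therefore $o(1)$; but for the error term $R_{n,k,r}(x)$ you claim, the tail must be bounded quantitatively, and this is a substantial portion of the paper's proof (for $k=2$ it splits into the cases of $t\ge 3$, $t=1$, and $t=2$ coordinates with $\mathfrak{N}(\mathfrak{d}_i)>x$, the last requiring a further split at $\mathfrak{N}(\mathfrak{d}_1)\le x^{3/2}$). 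None of this appears in your sketch. Second, your cross-term bound keeps the weight $\mathfrak{N}(\mathfrak{d}_i)^{-(1-\epsilon)}$ on the error coordinates and asserts local decay $\mathfrak{N}(\mathfrak{p})^{-rk+O(\epsilon)}$ lands ``comfortably'' in the region of absolute convergence; since $\epsilon=2/(d+1)$ can be as large as $1$ (e.g.\ $K=\mathbb{Q}$ or a quadratic field), this is not automatic. The paper instead bounds $(x/\mathfrak{N}(\mathfrak{d}_i))^{1-\epsilon}\le x^{1-\epsilon}$, reducing to the series $D(1,\dots,1,0)$, whose convergence criterion $k-1>1/r$ cleanly isolates the exceptional case $k=2$, $r=1$; you should do the same or else justify convergence uniformly in $d$.
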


\begin{proof}
By Theorem \ref{thm:main} and Lemma \ref{thm:Ideals}, the desired probability equals
$$\lim_{x \to \infty} \frac{\sum_{\mathfrak{N}(\mathfrak{a}_1), \dots, \mathfrak{N}(\mathfrak{a}_n) \leq x} \rho_{n,k,r}(\mathfrak{a}_1, \dots, \mathfrak{a}_n)}{H(x)^n} = \lim_{x \to \infty} \frac{P_{n,k,r} \cdot (cx)^n + O(R_{n,k,r}(x))}{(cx + O(x^{1 - \epsilon}))^n} = P_{n,k,r}.$$
\end{proof}

\section{Proof of the Main Theorem}

Let $n$, $k$, and $r$ be positive integers such that $n \geq k \geq 2$. Given $n$ nonzero ideals $\mathfrak{a}_1, \dots, \mathfrak{a}_n \subseteq \mathcal{O}$, we consider the characteristic function
$$\rho_{n,k,r}(\mathfrak{a}_1, \dots, \mathfrak{a}_n) = \begin{cases} 1 & \text{if } \mathfrak{a}_1, \dots, \mathfrak{a}_n \text{ are $k$-wise relatively $r$-prime,}\\ 0 & \text{otherwise.}\end{cases}$$

\noindent Observe that $\rho_{n,k,r}$ is multiplicative: If $\gcd(\mathfrak{a}_1 \dots \mathfrak{a}_n, \mathfrak{b}_1 \dots \mathfrak{b}_n) = (1)$, then
$$\rho_{n,k,r}(\mathfrak{a}_1 \mathfrak{b}_1, \dots, \mathfrak{a}_n \mathfrak{b}_n) = \rho_{n,k,r}(\mathfrak{a}_1, \dots, \mathfrak{a}_n) \rho_{n,k,r}(\mathfrak{b}_1, \dots, \mathfrak{b}_n).$$

\noindent Using the notation $\mathfrak{a} = \prod_{\mathfrak{p}} \mathfrak{p}^{\nu_{\mathfrak{p}}(\mathfrak{a})}$ for the prime ideal factorization of the nonzero ideal $\mathfrak{a} \subseteq \mathcal{O}$ (where all but finitely many $\nu_{\mathfrak{p}}(\mathfrak{a})$ equal 0), we deduce that for every nonzero ideals $\mathfrak{a}_1, \dots \mathfrak{a}_n \subseteq \mathcal{O}$:
$$\rho_{n,k,r}(\mathfrak{a}_1, \dots, \mathfrak{a}_n) = \prod_{\mathfrak{p}} \rho_{n,k,r}(\mathfrak{p}^{\nu_{\mathfrak{p}}(\mathfrak{a}_1)}, \dots, \mathfrak{p}^{\nu_{\mathfrak{p}}(\mathfrak{a}_n)}).$$

\noindent Moreover, for every $\nu_1, \dots, \nu_n \geq 0$,
$$\rho_{n,k,r}(\mathfrak{p}^{\nu_1}, \dots, \mathfrak{p}^{\nu_n}) = \begin{cases}
1 & \text{if there are at most $k-1$ values $\nu_i \geq r$},\\ 0 & \text{otherwise.} \end{cases}$$

\noindent We first state a lemma that gives a convergence result that we repeatedly use. Let
$$e_j(x_1, \dots, x_n) = \sum_{1 \leq i_1 < \dots < i_j \leq n} x_{i_1} \dots x_{i_j}$$
\noindent denote the elementary symmetric polynomial in $x_1, \dots, x_n$ of total degree $j$.

\begin{lem}\label{thm:convergence}
Fix positive integers $n,k,r$ where $n \geq k \geq 2$, and let $s_1, \dots, s_n \in \mathbb{C}$. If $\emph{Re}(s_1), \dots,
\emph{Re}(s_n) > 1$, then
$$\sum_{\mathfrak{a}_1, \dots, \mathfrak{a}_n} \frac{\rho_{n,k,r}(\mathfrak{a}_1, \dots, \mathfrak{a}_n)}{\mathfrak{N}(\mathfrak{a}_1)^{s_1} \dots \mathfrak{N}(\mathfrak{a}_n)^{s_n}}
= \zeta_K (s_1) \dots \zeta_K (s_n) D_{n,k,r}(s_1, \dots, s_n),$$
\noindent where
$$D_{n,k,r}(s_1, \dots, s_n) = \prod_{\mathfrak{p}} \Big[1 - \sum_{j=k}^n (-1)^{j-k} \binom{j-1}{k-1} e_j\Big(\frac{1}{\mathfrak{N}(\mathfrak{p})^{rs_1}}, \dots, \frac{1}{\mathfrak{N}(\mathfrak{p})^{rs_n}}\Big)\Big],$$

\noindent and $D_{n,k,r}(s_1, \dots, s_n)$ is absolutely convergent if and only if $\emph{Re}(s_{i_1} + \dots + s_{i_j}) > \frac{1}{r}$ for every $1 \leq i_1 < \dots < i_j \leq n$ with $k \leq j \leq n$.
\end{lem}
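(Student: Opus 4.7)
The plan is to exploit the multiplicativity of $\rho_{n,k,r}$ recorded just above the statement to rewrite the Dirichlet series as an Euler product, to compute each local factor explicitly using the prime-power description, and finally to peel off the factors that form $\zeta_K(s_1) \cdots \zeta_K(s_n)$ so that what remains is recognizable as $D_{n,k,r}$. Since $\rho_{n,k,r}(\mathfrak{p}^{\nu_1}, \dots, \mathfrak{p}^{\nu_n}) = 1$ precisely when at most $k-1$ of the $\nu_i$ are $\geq r$, I would stratify the local sum by the subset $S = \{i : \nu_i \geq r\}$ with $|S| \leq k-1$. Setting $x_i = \mathfrak{N}(\mathfrak{p})^{-s_i}$ and summing the geometric series $\sum_{\nu \geq r} x_i^\nu = x_i^r/(1-x_i)$ and $\sum_{\nu=0}^{r-1} x_i^\nu = (1-x_i^r)/(1-x_i)$ produces
\[
L_{\mathfrak{p}}(s_1,\dots,s_n) = \prod_{i=1}^n \frac{1}{1-x_i} \cdot \sum_{|S| \leq k-1} \prod_{i \in S} x_i^r \prod_{i \notin S}(1-x_i^r).
\]

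The first product, taken over all prime ideals $\mathfrak{p}$, assembles via the standard Euler factorization into $\zeta_K(s_1) \cdots \zeta_K(s_n)$, so it remains to identify the bracketed sum with the local factor of $D_{n,k,r}$. Writing $y_i = x_i^r$ and using the trivial identity $\sum_{S \subseteq \{1,\dots,n\}} \prod_{i \in S} y_i \prod_{i \notin S}(1-y_i) = 1$, the sum over $|S| \leq k-1$ becomes $1$ minus the sum over $|S| \geq k$. I would then expand each $\prod_{i \notin S}(1-y_i)$ as an alternating sum and collect monomials by their support $U$. For $|U| = \ell \geq k$, the coefficient of $\prod_{i \in U} y_i$ reduces to $\sum_{j=k}^{\ell} (-1)^{\ell - j}\binom{\ell}{j}$, which by the standard identity $\sum_{i=0}^{m}(-1)^i\binom{\ell}{i} = (-1)^m \binom{\ell-1}{m}$ equals $(-1)^{\ell-k}\binom{\ell-1}{k-1}$. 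This is exactly the coefficient appearing in the stated formula for $D_{n,k,r}$, and the first claim of the lemma follows.

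For the convergence statement, the local factor of $D_{n,k,r}$ has the form $1 + E_\mathfrak{p}$ where $E_\mathfrak{p}$ is a fixed finite linear combination of monomials $\mathfrak{N}(\mathfrak{p})^{-r(s_{i_1} + \cdots + s_{i_j})}$ with $k \leq j \leq n$. Using $\log(1+z) = O(|z|)$ for small $|z|$, the absolute convergence of the Euler product is equivalent to that of $\sum_\mathfrak{p} |E_\mathfrak{p}|$, which decomposes into finitely many sums of the shape $\sum_\mathfrak{p} \mathfrak{N}(\mathfrak{p})^{-r\,\mathrm{Re}(s_{i_1} + \cdots + s_{i_j})}$. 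By the standard growth estimate $H(x) \sim cx$ of Landau, each such sum converges if and only if the exponent strictly exceeds $1$, giving the condition $\mathrm{Re}(s_{i_1} + \cdots + s_{i_j}) > 1/r$; the necessity follows because for large $\mathfrak{N}(\mathfrak{p})$ the leading monomial in $E_\mathfrak{p}$ is not cancelled by the others. The step most in need of care is the combinatorial collapse of the $|S| \geq k$ sum to the stated $\binom{\ell-1}{k-1}$-coefficients; once that identity is in hand, the remaining manipulations are routine Euler-product bookkeeping.
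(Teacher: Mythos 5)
Your proposal is correct and follows essentially the same route as the paper: expand the Dirichlet series as an Euler product by multiplicativity, stratify each local factor by the set of indices with $\nu_i \geq r$, sum the geometric series, and peel off $\zeta_K(s_1)\cdots\zeta_K(s_n)$. The only difference is that you prove the final combinatorial identity (the coefficient $(-1)^{\ell-k}\binom{\ell-1}{k-1}$ on each $e_\ell$) directly by expanding over supports, whereas the paper cites it as Lemma 3.2 of T\'oth; your verification of that identity is correct, and your treatment of the convergence claim is at least as detailed as the paper's, which simply appeals to standard facts.
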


\begin{proof}
Since $\rho_{n,k,r}$ is a multiplicative function, we expand the given Dirichlet series into an infinite product:
$$\sum_{\mathfrak{a}_1, \dots, \mathfrak{a}_n} \frac{\rho_{n,k,r}(\mathfrak{a}_1, \dots, \mathfrak{a}_n)}{\mathfrak{N}(\mathfrak{a}_1)^{s_1} \dots \mathfrak{N}(\mathfrak{a}_n)^{s_n}}
= \prod_{\mathfrak{p}} \sum_{\nu_1, \dots, \nu_n} \frac{\rho_{n,k,r}(\mathfrak{p}^{\nu_1}, \dots, \mathfrak{p}^{\nu_n})}{\mathfrak{N}(\mathfrak{p})^{\nu_1 s_1 + \dots + \nu_n s_n}}.$$

\noindent To expand this further, we let $T$ denote a subset of $\{1, 2, \dots, n\}$. With this convention, the last expression can be rewritten as
\begin{align*} & \prod_{\mathfrak{p}} \Big[\sum_{j = 1}^{k-1} \sum_{|T| = j} \sum_{\substack{v_i \geq r \text{ if } i \in T\\ 0 \leq v_i \leq r-1 \text{ if } i \notin T}} \frac{1}{\mathfrak{N}(\mathfrak{p})^{\nu_1 s_1 + \dots + \nu_n s_n}}\Big]\\
&= \prod_{\mathfrak{p}} \Big[\sum_{j = 1}^{k-1} \sum_{|T| = j} \prod_{i \in T} \frac{1}{\mathfrak{N}(\mathfrak{p})^{rs_i}} \Big(1 - \frac{1}{\mathfrak{N}(\mathfrak{p})^{s_i}}\Big)^{-1} \cdot \prod_{i \notin T} \Big(1 - \frac{1}{\mathfrak{N}(\mathfrak{p})^{rs_i}} \Big) \Big(1 - \frac{1}{\mathfrak{N}(\mathfrak{p})^{s_i}}\Big)^{-1}\Big]\\
&=  \zeta_K (s_1) \dots \zeta_K (s_n) \prod_{\mathfrak{p}} \Big[\sum_{j = 1}^{k-1} \sum_{|T| = j} \prod_{i \in T} \frac{1}{\mathfrak{N}(\mathfrak{p})^{rs_i}} \cdot \prod_{i \notin T} \Big(1 - \frac{1}{\mathfrak{N}(\mathfrak{p})^{rs_i}} \Big)\Big]\\
&= \zeta_K (s_1) \dots \zeta_K (s_n) D_{n,k,r}(s_1, \dots, s_n).
\end{align*}

\noindent The last line directly follows from Lemma 3.2 in T\'oth \cite{Toth2}.
The absolute convergence of the given multivariate Dirichlet series as well as that of $D_{n,k,r}$ come directly from standard facts concerning absolute convergence of univariate Dirichlet series.
\end{proof}

Next, we define
$$\psi_{n,k,r}(\mathfrak{a}_1, \dots, \mathfrak{a}_n) = \sum_{\substack{\mathfrak{a}_i = \mathfrak{d}_i \mathfrak{e}_i \\ i = 1, \dots, n}} \rho_{n,k,r}(\mathfrak{d}_1, \dots, \mathfrak{d}_n) \mu(\mathfrak{e}_1) \dots \mu(\mathfrak{e}_n).$$
By using M\"{o}bius inversion, this is equivalent to writing
$$\rho_{n,k,r}(\mathfrak{a}_1, \dots, \mathfrak{a}_n) = \sum_{\substack{\mathfrak{d}_i \mid \mathfrak{a}_i\\ i = 1, \dots, n}} \psi_{n,k,r}(\mathfrak{d}_1, \dots, \mathfrak{d}_n).$$

\noindent It is clear that $\psi_{n,k,r}$ is symmetric in its arguments. Moreover, since $\rho_{n,k,r}$ is multiplicative, it follows that $\psi_{n,k,r}$ is also multiplicative. Hence, it suffices to find the value of $\rho_{n,k,r}$ when its arguments are all powers of the same prime ideal $\mathfrak{p}$.

\begin{lem}\label{thm:psi}
If $\mathfrak{p}$ be a prime ideal in a number ring $\mathcal{O}$ and $\nu_1, \dots, \nu_n$ are non-negative integers, then
$$\psi_{n,k,r}(\mathfrak{p}^{\nu_1}, \dots, \mathfrak{p}^{\nu_n}) = \begin{cases}
1 & \text{if all } \nu_i = 0,\\
(-1)^{j-k+1} \binom{j-1}{k-1} & \text{if all } \nu_i  \in \{0, r\}, \text{ and } j = \sum_{i = 1}^n \nu_i \geq rk,\\
0 & \text{otherwise.}
\end{cases}$$
\end{lem}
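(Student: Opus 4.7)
The plan is to exploit Lemma 3.1 together with the multiplicativity of $\psi_{n,k,r}$. The M\"obius-inversion relation $\rho_{n,k,r} = 1 \ast \psi_{n,k,r}$ (multivariate Dirichlet convolution) combined with Lemma 3.1 immediately gives
$$\sum_{\mathfrak{d}_1, \dots, \mathfrak{d}_n} \frac{\psi_{n,k,r}(\mathfrak{d}_1, \dots, \mathfrak{d}_n)}{\mathfrak{N}(\mathfrak{d}_1)^{s_1} \cdots \mathfrak{N}(\mathfrak{d}_n)^{s_n}} = D_{n,k,r}(s_1, \dots, s_n).$$
Since $\psi_{n,k,r}$ is multiplicative, its local generating series at a prime $\mathfrak{p}$, namely $\sum_{\nu_1, \dots, \nu_n \geq 0} \psi_{n,k,r}(\mathfrak{p}^{\nu_1}, \dots, \mathfrak{p}^{\nu_n}) X_1^{\nu_1} \cdots X_n^{\nu_n}$ with $X_i = \mathfrak{N}(\mathfrak{p})^{-s_i}$, must coincide with the explicit $\mathfrak{p}$-factor $1 - \sum_{j=k}^n (-1)^{j-k} \binom{j-1}{k-1} e_j(X_1^r, \dots, X_n^r)$ of $D_{n,k,r}$. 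Reading off coefficients then yields the lemma.

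Specifically, because $e_j(X_1^r, \dots, X_n^r) = \sum_{|T|=j} \prod_{i \in T} X_i^r$, only monomials in which every $\nu_i \in \{0, r\}$ can appear. Letting $j = \#\{i : \nu_i = r\}$, the coefficient equals $1$ for $j = 0$, equals $0$ for $1 \leq j \leq k-1$ (no such term is present), and equals $-(-1)^{j-k} \binom{j-1}{k-1} = (-1)^{j-k+1} \binom{j-1}{k-1}$ for $j \geq k$. These three possibilities match the three cases of the lemma.

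For a self-contained derivation independent of Lemma 3.1, one can instead compute directly from the definition. Since $\mu(\mathfrak{p}^t) = 0$ for $t \geq 2$, only choices with each $\mathfrak{e}_i \in \{(1), \mathfrak{p}\}$ contribute, giving
$$\psi_{n,k,r}(\mathfrak{p}^{\nu_1}, \dots, \mathfrak{p}^{\nu_n}) = \sum_{S \subseteq \{i : \nu_i \geq 1\}} (-1)^{|S|} \rho_{n,k,r}(\mathfrak{p}^{\nu_1 - [1 \in S]}, \dots, \mathfrak{p}^{\nu_n - [n \in S]}).$$
Partitioning $\{1, \dots, n\}$ into the four classes $\{\nu_i = 0\}$, $\{1 \leq \nu_i \leq r-1\}$, $\{\nu_i = r\}$, $\{\nu_i > r\}$, one checks that the value of $\rho_{n,k,r}$ depends only on $|S \cap \{i : \nu_i = r\}|$. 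The independent inner sums over the intersections of $S$ with the second and fourth classes each collapse via $\sum_{t=0}^{M} (-1)^t \binom{M}{t} = 0$ unless those classes are empty, which forces every $\nu_i \in \{0, r\}$. The remaining partial alternating binomial sum is evaluated by the standard identity $\sum_{a=0}^{N} (-1)^a \binom{m}{a} = (-1)^N \binom{m-1}{N}$, producing the claimed coefficient.

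The main obstacle in the hands-on second approach is simply the bookkeeping among the four index classes, whereas via Lemma 3.1 the result reduces to extracting coefficients from an explicit Euler factor with no substantive difficulty.
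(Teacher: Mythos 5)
Your proposal is correct, and your primary route is genuinely different from the paper's. The paper proves the lemma by direct computation from the definition of $\psi_{n,k,r}$: it shows the convolution collapses (via $\mu(\mathfrak{p})+\mu((1))=0$) whenever some $\nu_i \notin \{0,r\}$, and then evaluates the remaining partial alternating binomial sum with $\sum_{i=0}^{d}(-1)^i\binom{m}{i}=(-1)^d\binom{m-1}{d}$ --- this is exactly your second, ``self-contained'' argument, merely organized coordinate-by-coordinate rather than by your four index classes. Your first argument instead reads off the values of $\psi_{n,k,r}$ as coefficients of the Euler factor of $D_{n,k,r}$ from Lemma 3.1; since that lemma is proved independently (it rests on T\'oth's combinatorial identity, not on the present lemma), there is no circularity, and this route trades the binomial bookkeeping for an identity the paper has already imported. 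One caution on that route: the global identity $\sum\psi_{n,k,r}/\mathfrak{N}(\mathfrak{d}_1)^{s_1}\cdots\mathfrak{N}(\mathfrak{d}_n)^{s_n}=D_{n,k,r}$ does not by itself determine $\psi_{n,k,r}$ on individual ideals, because distinct prime ideals can share the same norm; you should instead identify the local factor of $\psi_{n,k,r}$ at $\mathfrak{p}$ directly as the local factor of $\rho_{n,k,r}$ at $\mathfrak{p}$ multiplied by $\prod_{i}\bigl(1-\mathfrak{N}(\mathfrak{p})^{-s_i}\bigr)$, which is exactly what the displayed computation in the proof of Lemma 3.1 supplies prime by prime. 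Finally, note that both you and the paper's own proof take $j$ to be the \emph{number} of indices with $\nu_i=r$, whereas the lemma as printed sets $j=\sum_i\nu_i$; these agree only when $r=1$, so the statement contains a typo that your argument (like the paper's) silently corrects.
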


\begin{proof}
In order to prove this lemma, we first observe that
$$\psi_{n,k,r}(\mathfrak{p}^{\nu_1}, \dots, \mathfrak{p}^{\nu_n}) = \sum_{\substack{0 \leq j_i \leq \nu_i\\ i = 1, \dots, n}} \rho_{n,k,r}(\mathfrak{p}^{j_1}, \dots, \mathfrak{p}^{j_n}) \mu(\mathfrak{p}^{\nu_1 - j_1}) \dots \mu(\mathfrak{p}^{\nu_n - j_n}).$$

First, suppose that one of $\nu_i \geq r+1$; without loss of generality, assume that $\nu_1 \geq r+1$. Then, $\psi_{n,k,r}(\mathfrak{p}^{\nu_1}, \dots, \mathfrak{p}^{\nu_n})$ equals

\begin{align*}
&\sum_{\substack{0 \leq j_i \leq \nu_i\\ i = 2, \dots, n}} \Big[\rho_{n,k,r}(\mathfrak{p}^{v_1 - 1}, \mathfrak{p}^{j_2}, \dots, \mathfrak{p}^{j_n}) \mu(\mathfrak{p}) + \rho_{n,k,r}(\mathfrak{p}^{v_1}, \mathfrak{p}^{j_2}, \dots, \mathfrak{p}^{j_n}) \mu((1))\Big] \mu(\mathfrak{p}^{\nu_2 - j_2}) \dots \mu(\mathfrak{p}^{\nu_n - j_n})\\
&= \sum_{\substack{0 \leq j_i \leq \nu_i\\ i = 2, \dots, n}} \rho_{n,k,r}(\mathfrak{p}^{v_1}, \mathfrak{p}^{j_2}, \dots, \mathfrak{p}^{j_n}) \Big[\mu(\mathfrak{p}) + \mu((1))\Big]  \mu(\mathfrak{p}^{\nu_2 - j_2}) \dots \mu(\mathfrak{p}^{\nu_n - j_n}) \\ &= 0.\end{align*}

\noindent Similarly, if one of $\nu_i \in \{1, 2, \dots, r-1\}$, then $\psi_{n,k,r}(\mathfrak{p}^{\nu_1}, \dots, \mathfrak{p}^{\nu_n}) = 0$.

Hereafter, we assume that each $\nu_i \in \{0, r\}$. Since the claim is clearly true if all $\nu_i = 0$, assume that there are $j \geq 1$ arguments of $\psi_{n,k,r}$ for which $\nu_i = r$; without loss of generality we consider $\psi_{n,k,r}(\underbrace{\mathfrak{p}^r, \dots , \mathfrak{p}^r}_{j \text{ times}}, \underbrace{(1), \dots, (1)}_{n-j \text{ times}})$. Then, we have
$$\psi_{n,k,r}(\underbrace{\mathfrak{p}^r, \dots , \mathfrak{p}^r}_{j}, \underbrace{(1), \dots, (1)}_{n-j}) = \sum_{i = 0}^j \binom{j}{i} \rho_{n,k,r}(\underbrace{\mathfrak{p}^r, \dots , \mathfrak{p}^r}_{i}, \underbrace{(1), \dots, (1)}_{n-i}) \mu(\mathfrak{p})^{j-i}.$$

\noindent If $j < k$, then the Binomial Theorem yields
$$\psi_{n,k,r}(\underbrace{\mathfrak{p}^r, \dots , \mathfrak{p}^r}_{j}, \underbrace{(1), \dots, (1)}_{n-j}) = \sum_{i = 0}^j (-1)^{j-i}  \binom{j}{i} = 0.$$

\noindent If $j \geq k$, then
$$\psi_{n,k,r}(\underbrace{\mathfrak{p}^r, \dots , \mathfrak{p}^r}_{j}, \underbrace{(1), \dots, (1)}_{n-j}) =
\Big[\sum_{i = 0}^{k-1} (-1)^{j-i}  \binom{j}{i}\Big] + 0 = (-1)^{j-k+1} \binom{j-1}{k-1}.$$

\noindent Note that for the last equality, we used the identity (where $0 \leq d < n$)
$$\sum_{i = 0}^d (-1)^i \binom{n}{i} = (-1)^d \binom{n-1}{d}.$$
\end{proof}

Before we finally prove the main theorem of this article, we need to establish the following combinatorial identity.

\begin{lem}\label{thm:binomial}
Let $n$ and $k$ be positive integers where $n \geq k$.  Then,
$$\sum_{j=k}^n (-1)^{j-k} \binom{n}{j} \binom{j-1}{k-1} x^j = \sum_{j=k}^n \binom{n}{j} x^j (1-x)^{n-j}.$$
\end{lem}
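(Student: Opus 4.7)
The plan is to reduce the right-hand side to the left-hand side by a direct expansion and re-indexing. I would begin by applying the binomial theorem to the factor $(1-x)^{n-j}$, writing
$$\sum_{j=k}^n \binom{n}{j} x^j (1-x)^{n-j} = \sum_{j=k}^n \sum_{i=0}^{n-j} (-1)^i \binom{n}{j}\binom{n-j}{i} x^{j+i},$$
and then swap the order of summation via the substitution $m = j+i$, so that for fixed $m \in \{k,\dots,n\}$ the index $j$ runs from $k$ to $m$. Using the standard trinomial identity $\binom{n}{j}\binom{n-j}{m-j} = \binom{n}{m}\binom{m}{j}$, one extracts a factor of $\binom{n}{m}x^m$ and is left with the inner sum $S_m = \sum_{j=k}^m (-1)^{m-j}\binom{m}{j}$.

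The second step is to evaluate $S_m$ in closed form. Since $\sum_{j=0}^m (-1)^{m-j}\binom{m}{j} = 0$ for $m\geq 1$, we have $S_m = -\sum_{j=0}^{k-1}(-1)^{m-j}\binom{m}{j}$. Factoring out $(-1)^m$ and applying the partial alternating binomial identity
$$\sum_{i=0}^d (-1)^i \binom{n}{i} = (-1)^d \binom{n-1}{d},$$
which was already invoked in the proof of Lemma \ref{thm:psi}, this simplifies to $S_m = (-1)^{m-k}\binom{m-1}{k-1}$. Substituting back produces $\sum_{m=k}^n (-1)^{m-k}\binom{n}{m}\binom{m-1}{k-1} x^m$, which is precisely the left-hand side after renaming $m$ to $j$.

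The argument is essentially mechanical, so the main obstacle is purely clerical: keeping track of signs and of the ranges of the summation indices during the swap. No new combinatorial tool is required beyond the two identities already used in the preceding lemmas. As a sanity check one may note that for $k = 1$ both sides reduce to $1 - (1-x)^n$, and for $k = n$ both sides reduce to $x^n$, consistent with the probabilistic interpretation that each expression equals $\mathbb{P}(X \geq k)$ for a binomial random variable $X \sim \mathrm{Bin}(n,x)$ (the left-hand side being the inclusion--exclusion expansion of this tail probability); this interpretation could be used to give an alternative proof, but the direct algebraic route outlined above is shorter.
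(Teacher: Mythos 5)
Your proof is correct and follows essentially the same route as the paper's: both expand $(1-x)^{n-j}$ by the Binomial Theorem, reindex to collect the coefficient of each power of $x$, and reduce the claim to evaluating the alternating partial sum $\sum_{j=k}^{m}(-1)^{m-j}\binom{m}{j}$. The only cosmetic difference is that the paper telescopes this tail directly via Pascal's identity, whereas you complement it against the full (vanishing) alternating sum and reuse the lower-tail identity already invoked in the proof of Lemma \ref{thm:psi}; the two evaluations are equivalent.
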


\begin{proof}
By expanding $(1-x)^{n-j}$ with the Binomial Theorem and collecting like terms, we obtain
\begin{align*}
\sum_{j=k}^n \binom{n}{j} x^j (1-x)^{n-j}
&= \sum_{j=k}^n \Big[\sum_{l=0}^{j-k} (-1)^{j-k+l} \binom{n}{k+l} \binom{n-k-l}{r-j}\Big] x^j\\
&= \sum_{j=k}^n \Big[\sum_{l=0}^{j-k} (-1)^{j-k+l} \binom{n}{j} \binom{j}{k+l}\Big] x^j.
\end{align*}

\noindent Therefore, it suffices to prove that for all $j = k , \dots, n$:
$$\binom{j-1}{k-1} = \sum_{l=0}^{j-k} (-1)^l \binom{j}{k+l}.$$

\noindent This latter identity immediately follows rewriting the series as a telescoping sum via Pascal's identity $$\binom{j}{k+l} = \binom{j-1}{k+l} + \binom{j-1}{k+l-1}.$$
\end{proof}



\begin{proof} [Proof of Theorem \ref{thm:main}]


First of all, we apply Lemma \ref{thm:psi} to obtain

$$\sum_{\mathfrak{a}_1, \dots, \mathfrak{a}_n} \frac{\psi_{n,k,r}(\mathfrak{a}_1, \dots, \mathfrak{a}_n)}{\mathfrak{N}(\mathfrak{a}_1)^{s_1} \dots \mathfrak{N}(\mathfrak{a}_n)^{s_n}}
= \prod_{\mathfrak{p}} \sum_{\nu_1, \dots, \nu_n=0}^{\infty} \frac{\psi_{n,k,r}(\mathfrak{p}^{\nu_1}, \dots, \mathfrak{p}^{\nu_n})}{\mathfrak{N}(\mathfrak{p})^{r\nu_1 s_1 + \dots + r\nu_n s_n}} = D_{n,k,r}(s_1, \dots, s_n).$$


\noindent Then by using Lemma \ref{thm:Ideals}, we find that $\displaystyle \sum_{\substack{\mathfrak{N}(\mathfrak{a}_i) \leq x\\i = 1, \dots, n}} \rho_{n,k,r}(\mathfrak{a}_1, \dots, \mathfrak{a}_n)$ equals
\begin{align*}
&\sum_{\substack{\mathfrak{N}(\mathfrak{d}_i) \leq x\\i = 1, \dots, n}} \psi_{n,k,r}(\mathfrak{d}_1, \dots, \mathfrak{d}_n) H\Big(\frac{x}{\mathfrak{N}(\mathfrak{d}_1)}\Big) \dots H\Big(\frac{x}{\mathfrak{N}(\mathfrak{d}_n)}\Big)\\
&= \sum_{\substack{\mathfrak{N}(\mathfrak{d}_i) \leq x\\i = 1, \dots, n}} \psi_{n,k,r}(\mathfrak{d}_1, \dots, \mathfrak{d}_n) \prod_{j = 1}^n \Big[\frac{cx}{\mathfrak{N}(\mathfrak{d}_j)} + O\Big(\Big(\frac{x}{\mathfrak{N}(\mathfrak{d}_j)} \Big)^{1-\epsilon}\Big)\Big]\\
&= (cx)^n \sum_{\substack{\mathfrak{N}(\mathfrak{d}_i) \leq x\\i = 1, \dots, n}}
\frac{\psi_{n,k,r}(\mathfrak{d}_1, \dots, \mathfrak{d}_n)}{\mathfrak{N}(\mathfrak{d}_1) \dots \mathfrak{N}(\mathfrak{d}_n)} + Q_{n,k,r}(x),
\end{align*}

\noindent where
$$Q_{n,k,r}(x) \ll \sum_{\substack{u_1, \dots, u_n \in \{0,1\}\\ \text{at least one } u_i = 0}} x^{u_1 + \dots + u_n} \cdot x^{1-\epsilon} \sum_{\substack{\mathfrak{N}(\mathfrak{d}_i) \leq x\\i = 1, \dots, n}}
\frac{|\psi_{n,k,r}(\mathfrak{d}_1, \dots, \mathfrak{d}_n)|}{\mathfrak{N}(\mathfrak{d}_1)^{u_1} \dots \mathfrak{N}(\mathfrak{d}_n)^{u_n}}.$$

First, we estimate $Q_{n,k,r}(x)$. Without loss of generality, fix $u_1, \dots, u_n \in \{0, 1\}$, where $u_n = 0$. Then, letting
$$A = x^{u_1 + \dots + u_n} \cdot \, x^{1-\epsilon} \sum_{\substack{\mathfrak{N}(\mathfrak{d}_i) \leq x\\i = 1, \dots, n}} \frac{|\psi_{n,k,r}(\mathfrak{d}_1, \dots, \mathfrak{d}_n)|}{\mathfrak{N}(\mathfrak{d}_1)^{u_1} \dots \mathfrak{N}(\mathfrak{d}_n)^{u_n}},$$
\noindent we obtain
$$A \leq x^{n-\epsilon} \sum_{\substack{\mathfrak{N}(\mathfrak{d}_i) \leq x\\i = 1, \dots, n}}
\frac{|\psi_{n,k,r}(\mathfrak{d}_1, \dots, \mathfrak{d}_n)|}{\mathfrak{N}(\mathfrak{d}_1) \dots \mathfrak{N}(\mathfrak{d}_{n-1})}.$$

First, we suppose that $k \geq 3$. Then since $D_{n,k,r}(1, \dots, 1, 0)$ is absolutely convergent for $k \geq 3$ by Lemma \ref{thm:convergence}, we find that $A = O(x^{n - \epsilon}).$ Hence, we conclude that $Q_{n,k,r}(x) = O(x^{n - \epsilon})$.

Next suppose that $k = 2$. Then, we have
\begin{align*}
A &\leq x^{n - \epsilon} \prod_{\mathfrak{N}(\mathfrak{p}) \leq x} \sum_{\nu_1, \dots, \nu_n = 0}^{\infty}
\frac{|\psi_{n,2,r}(\mathfrak{p}^{\nu_1}, \dots, \mathfrak{p}^{\nu_n})|}{\mathfrak{N}(\mathfrak{p})^{\nu_1 + \dots + \nu_{n-1}}}\\
&= x^{n - \epsilon} \prod_{\mathfrak{N}(\mathfrak{p}) \leq x} \Big(1 + \frac{n-1}{\mathfrak{N}(\mathfrak{p}^r)}
+ \frac{c_2}{\mathfrak{N}(\mathfrak{p}^r)^2} + \dots + \frac{c_{n-1}}{\mathfrak{N}(\mathfrak{p}^r)^{n-1}} \Big)
\end{align*}

\noindent for some positive integers $c_2, \dots, c_{n-1}$. Note that we have exactly $\mathfrak{N}(\mathfrak{p}^r)$ in the denominator if and only if $\nu_n = r$ and exactly one of $\nu_1, \dots, \nu_{n-1}$ equals $r$ with the rest of them equaling 0; this occurs $n-1$ times. Therefore,
$$A = O\Big(x^{n - \epsilon} \prod_{\mathfrak{N}(\mathfrak{p}) \leq x} \Big(1 + \frac{1}{\mathfrak{N}(\mathfrak{p}^r)}\Big)^{n-1}\Big).$$
\noindent If $r \geq 2$, this latter product is $O(1)$, and thus $A = O(x^{n - \epsilon})$. If $r = 1$, we use Mertens' theorem for number fields \cite{Rosen}
$$\prod_{\mathfrak{N}(\mathfrak{p}) \leq x} \Big(1 - \frac{1}{\mathfrak{N}(\mathfrak{p})}\Big)^{-1} = e^{\gamma} \alpha_K \log{x} + O(1),$$

\noindent where $\alpha_K$ is the residue of $\zeta_{K}(s)$ at $s = 1$, and we find that $A = O(x^{n - \epsilon} \log^{n-1}{x})$. Hence, we conclude that
$$Q_{n,2,r}(x) = \begin{cases} O(x^{n - \epsilon} \log^{n-1}{x}) & \text{if } r = 1,\\ O(x^{n - \epsilon}) & \text{if } r \geq 2. \end{cases}$$

In summary, we have found that
$$Q_{n,k,r}(x) = \begin{cases}  O(x^{n - \epsilon} \log^{n-1}{x}) & \text{if } k = 2 \text{ and } r = 1,\\
O(x^{n - \epsilon}) & \text{otherwise.}\\
\end{cases}$$

\noindent Next, we turn our attention to examining the sum
$$\sum_{\substack{\mathfrak{N}(\mathfrak{d}_i) \leq x\\i = 1, \dots, n}}
\frac{\psi_{n,k,r}(\mathfrak{d}_1, \dots, \mathfrak{d}_n)}{\mathfrak{N}(\mathfrak{d}_1) \dots \mathfrak{N}(\mathfrak{d}_n)}.$$

We first rewrite this as
$$\sum_{\mathfrak{d}_1, \dots, \mathfrak{d}_n}
\frac{\psi_{n,k,r}(\mathfrak{d}_1, \dots, \mathfrak{d}_n)}{\mathfrak{N}(\mathfrak{d}_1) \dots \mathfrak{N}(\mathfrak{d}_n)} -
\sum_{\substack{I \subseteq \{1, \dots, n\} \\I \neq \varnothing}}
\sum_{\substack{\mathfrak{N}(\mathfrak{d}_i) > x, \, i \in I \\ \mathfrak{N}(\mathfrak{d}_j) \leq x, \, j \notin I}}
\frac{\psi_{n,k,r}(\mathfrak{d}_1, \dots, \mathfrak{d}_n)}{\mathfrak{N}(\mathfrak{d}_1) \dots \mathfrak{N}(\mathfrak{d}_n)}.$$

\noindent The first series is convergent by Lemma \ref{thm:convergence} and equals
$$D_{n,k,r}(1, \dots, 1) = \prod_{\mathfrak{p}} \Big(1 - \sum_{j=k}^n (-1)^{j-k} \binom{n}{j} \binom{j-1}{k-1} \frac{1}{\mathfrak{N}(\mathfrak{p}^r)^j} \Big).$$
\noindent Moreover, this product equals $P_{n,k,r}$, and we can see this from rewriting $P_{n,k,r}$ as
$$P_{n,k,r} = \prod_{\mathfrak{p}} \Big[1 - \sum_{j=k}^n \binom{n}{j}
\Big(1 - \frac{1}{\mathfrak{N}(\mathfrak{p}^r)}\Big)^{n-j} \frac{1}{\mathfrak{N}(\mathfrak{p}^r)^j}\Big],$$

\noindent and applying Lemma \ref{thm:binomial} with $x = \frac{1}{\mathfrak{N}(\mathfrak{p}^r)}$.

In order to estimate the second series, fix $I$ and assume without loss of generality that $I = \{1, 2, \dots, t\}$. Then, it follows that $\mathfrak{N}(\mathfrak{d}_1), \dots, \mathfrak{N}(\mathfrak{d}_t) > x$, and $\mathfrak{N}(\mathfrak{d}_{t+1}), \dots, \mathfrak{N}(\mathfrak{d}_n) \leq x$ where $t \geq 1$. We estimate the following sum in cases:
$$B := \sum_{\substack{\mathfrak{N}(\mathfrak{d}_i) > x, \, i = 1, \dots, t \\ \mathfrak{N}(\mathfrak{d}_j) \leq x, \, j = t+1, \dots, n}} \frac{|\psi_{n,k,r}(\mathfrak{d}_1, \dots, \mathfrak{d}_n)|}{\mathfrak{N}(\mathfrak{d}_1) \dots \mathfrak{N}(\mathfrak{d}_n)}.$$

First, assume that $k \geq 3$. Then, we obtain via Lemma \ref{thm:convergence}
$$B < \frac{1}{x} \sum_{\mathfrak{d}_1, \dots, \mathfrak{d}_n} \frac{|\psi_{n,k,r}(\mathfrak{d}_1, \dots, \mathfrak{d}_n)|}{\mathfrak{N}(\mathfrak{d}_2) \dots \mathfrak{N}(\mathfrak{d}_n)} = O\Big(\frac{1}{x}\Big).$$

Next, suppose that $k = 2$ and $t \geq 3$. By fixing $0 < \delta < \frac{1}{2}$, Lemma \ref{thm:convergence} yields
\begin{align*}
B &= \sum_{\substack{\mathfrak{N}(\mathfrak{d}_i) > x, \, i = 1, \dots, t \\ \mathfrak{N}(\mathfrak{d}_j) \leq x, \, j = t+1, \dots, n}} \frac{|\psi_{n,2,r}(\mathfrak{d}_1, \dots, \mathfrak{d}_n)| \; \mathfrak{N}(\mathfrak{d}_1)^{\delta - \frac{1}{2}} \dots \mathfrak{N}(\mathfrak{d}_t)^{\delta - \frac{1}{2}}}{\mathfrak{N}(\mathfrak{d}_1)^{\delta + \frac{1}{2}} \dots \mathfrak{N}(\mathfrak{d}_t)^{\delta + \frac{1}{2}} \mathfrak{N}(\mathfrak{d}_{t+1}) \dots \mathfrak{N}(\mathfrak{d}_n)}\\
& < x^{t(\delta - \frac{1}{2})} \sum_{\mathfrak{d}_1, \dots, \mathfrak{d}_n}  \frac{|\psi_{n,2,r}(\mathfrak{d}_1, \dots, \mathfrak{d}_n)|} {\mathfrak{N}(\mathfrak{d}_1)^{\delta + \frac{1}{2}} \dots \mathfrak{N}(\mathfrak{d}_t)^{\delta + \frac{1}{2}} \mathfrak{N}(\mathfrak{d}_{t+1}) \dots \mathfrak{N}(\mathfrak{d}_n)}\\
&= O(x^{t(\delta - \frac{1}{2})}).
\end{align*}

\noindent Since $t \geq 3$ and $0 < \delta < \frac{1}{2}$, we see that $t(\delta - \frac{1}{2}) < -1$, and we conclude that $B = O(\frac{1}{x})$.

Now, suppose that $k = 2$ and $t = 1$. Without loss of generality, let $\mathfrak{N}(\mathfrak{d}_1) > x$ and $\mathfrak{N}(\mathfrak{d}_2), \dots, \mathfrak{N}(\mathfrak{d}_n) \leq x$, and fix a prime ideal $\mathfrak{p}$.
If $\mathfrak{p} \mid \mathfrak{d}_i$ for some $i = 2, \dots, n$, then $\mathfrak{N}(\mathfrak{p}) \leq x$. If $\mathfrak{p} \mid \mathfrak{d}_1$ and $\mathfrak{N}(\mathfrak{p}) > x$, then $\mathfrak{p} \nmid \mathfrak{d}_i$ for all $i = 2, \dots, n$, and thus $\psi_{n,2,r}(\mathfrak{d}_1, \dots, \mathfrak{d}_n) = 0$. Hence, it suffices to consider the prime ideals with norm at most $x$. This implies that

\begin{align*}
B &< \frac{1}{x} \sum_{\substack{\mathfrak{N}(\mathfrak{d}_1) > x \\ \mathfrak{N}(\mathfrak{d}_j) \leq x, \, j = 2, \dots, n}} \frac{|\psi_{n,2,r}(\mathfrak{d}_1, \dots, \mathfrak{d}_n)|} {\mathfrak{N}(\mathfrak{d}_2) \dots \mathfrak{N}(\mathfrak{d}_n)}\\
& \leq \frac{1}{x} \prod_{\mathfrak{N}(\mathfrak{p}) \leq x} \sum_{\nu_1, \dots, \nu_n = 0}^{\infty}
\frac{|\psi_{n,2,r}(\mathfrak{p}^{\nu_1}, \dots, \mathfrak{p}^{\nu_n})|} {\mathfrak{N}(\mathfrak{p})^{\nu_2 + \dots + \nu_n}}.
\end{align*}

\noindent Noting that this resulting series is reminiscent of estimating $Q_{n,2,r}(x)$, we obtain
$$B = \begin{cases} O\Big(\frac{\log^{n-1}{x}}{x}\Big) & \text{if } r = 1,\\ O\Big(\frac{1}{x}\Big) & \text{if } r \geq 2.\end{cases}$$

Finally, suppose that $k = 2$ and $t = 2$. We examine the sum $B$ in two parts: $B_1$ which has $\mathfrak{N}(\mathfrak{d}_1) > x^{3/2}$, and $B_2$ which has $\mathfrak{N}(\mathfrak{d}_1) \leq x^{3/2}$. Estimating $B_1$ is fairly straightforward:
\begin{align*}
B_1
&= \sum_{\substack{\mathfrak{N}(\mathfrak{d}_1) > x^{3/2}, \, \mathfrak{N}(\mathfrak{d}_2) > x \\ \mathfrak{N}(\mathfrak{d}_j) \leq x, \, j = 3, \dots, n}}
\frac{1}{\mathfrak{N}(\mathfrak{d}_1)^{2/3}} \cdot
\frac{|\psi_{n,2,r}(\mathfrak{d}_1, \dots, \mathfrak{d}_n)|} {\mathfrak{N}(\mathfrak{d}_1)^{1/3} \mathfrak{N}(\mathfrak{d}_2) \dots \mathfrak{N}(\mathfrak{d}_n)}\\
&< \frac{1}{x} \sum_{\mathfrak{d}_1, \dots, \mathfrak{d}_n} \frac{|\psi_{n,2,r}(\mathfrak{d}_1, \dots, \mathfrak{d}_n)|} {\mathfrak{N}(\mathfrak{d}_1)^{1/3} \mathfrak{N}(\mathfrak{d}_2) \dots \mathfrak{N}(\mathfrak{d}_n)}\\
&= O\Big(\frac{1}{x}\Big).
\end{align*}

\noindent As for $B_2$, we immediately have
$$B_2 < \frac{1}{x} \sum_{\substack{\mathfrak{N}(\mathfrak{d}_1) \leq x^{3/2}, \, \mathfrak{N}(\mathfrak{d}_2) > x \\ \mathfrak{N}(\mathfrak{d}_j) \leq x, \, j = 3, \dots, n}} \frac{|\psi_{n,2,r}(\mathfrak{d}_1, \dots, \mathfrak{d}_n)|} {\mathfrak{N}(\mathfrak{d}_1) \mathfrak{N}(\mathfrak{d}_3) \dots \mathfrak{N}(\mathfrak{d}_n)}.$$

\noindent Consider a prime ideal $\mathfrak{p}$.
If $\mathfrak{p} \mid \mathfrak{d}_i$ for some $i = 1, 3, \dots, n$, then $\mathfrak{N}(\mathfrak{p}) \leq x^{3/2}$. If $\mathfrak{p} \mid \mathfrak{d}_2$ and $\mathfrak{N}(\mathfrak{p}) > x^{3/2}$, then $\mathfrak{p} \nmid \mathfrak{d}_i$ for all $i = 1, 3, \dots, n$, and thus $\psi_{n,2,r}(\mathfrak{d}_1, \dots, \mathfrak{d}_n) = 0$. Hence, it suffices to consider the prime ideals with norm at most $x^{3/2}$. We obtain
$$B_2 < \frac{1}{x} \prod_{\mathfrak{N}(\mathfrak{p}) \leq x^{3/2}} \sum_{\nu_1, \dots, \nu_n = 0}^{\infty}
\frac{|\psi_{n,2,r}(\mathfrak{p}^{\nu_1}, \dots, \mathfrak{p}^{\nu_n})|} {\mathfrak{N}(\mathfrak{p})^{\nu_1 + \dots + \nu_n}}.$$

\noindent As we have seen previously (as in estimating $Q_{n,2,r}(x)$), this yields
$$B_2 = \begin{cases} \frac{1}{x} O(\log^{n-1}(x^{3/2})) = O\Big(\frac{\log^{n-1}{x}}{x}\Big) & \text{if } r = 1,\\ \frac{1}{x} O(1) = O\Big(\frac{1}{x}\Big) & \text{if } r \geq 2.\end{cases}$$

In summary,
$$\sum_{\mathfrak{N}(\mathfrak{d}_1), \dots, \mathfrak{N}(\mathfrak{d}_n) \leq x}
\frac{\psi_{n,k,r}(\mathfrak{d}_1, \dots, \mathfrak{d}_n)}{\mathfrak{N}(\mathfrak{d}_1) \dots \mathfrak{N}(\mathfrak{d}_n)} = P_{n,k,r} + \begin{cases}
O(\frac{\log^{n-1}{x}}{x}) & \text{if }k = 2 \text{ and } r = 1,\\ O(\frac{1}{x})& \text{otherwise}.\end{cases}$$

\noindent Consequently, we conclude that
$$\sum_{\mathfrak{N}(\mathfrak{a}_1), \dots, \mathfrak{N}(\mathfrak{a}_n) \leq x} \rho_{n,k,r}(\mathfrak{a}_1, \dots, \mathfrak{a}_n) = P_{n,k,r} \cdot (cx)^n + R_{n,k,r}(x),$$

\noindent where $$R_{n,k,r}(x) =
\begin{cases}
O(x^{n-\epsilon} \log^{n-1}{x}) & \text{if }k = 2 \text{ and } r = 1, \\ O(x^{n-\epsilon})& \text{otherwise}.\end{cases}$$

\end{proof}

\section{Approximating the Probabilities}

Inspired by \cite{Toth1}, we now give a way to approximate the infinite products developed in this article. Observe by the Binomial Theorem that for all $k \leq n$, we have
$$\sum_{j=0}^{k-1} \binom{n}{j} \Big(1 - \frac{1}{\mathfrak{N}(\mathfrak{p}^r)}\Big)^{n-j} \frac{1}{\mathfrak{N}(\mathfrak{p}^r)^j} < 1.$$

\noindent Hence, we can take negative logarithms (to create a series with positive terms):
$$-\log{P_{n,k,r}} = \sum_{\mathfrak{p}} \log\Big(\Big[\sum_{j=0}^{k-1} \binom{n}{j} \Big(1 - \frac{1}{\mathfrak{N}(\mathfrak{p}^r)}\Big)^{n-j} \frac{1}{\mathfrak{N}(\mathfrak{p}^r)^j} \Big]^{-1}\Big).$$

\noindent However, for all $k \geq 2$:
\begin{align*}
&\sum_{j=0}^{k-1} \binom{n}{j} \Big(1 - \frac{1}{\mathfrak{N}(\mathfrak{p}^r)}\Big)^{n-j} \frac{1}{\mathfrak{N}(\mathfrak{p}^r)^j}\\
& \geq \Big(1 - \frac{1}{\mathfrak{N}(\mathfrak{p}^r)}\Big)^n + n\Big(1 - \frac{1}{\mathfrak{N}(\mathfrak{p}^r)}\Big)^{n-1} \frac{1}{\mathfrak{N}(\mathfrak{p}^r)}\\
& = \Big(1 - \frac{1}{\mathfrak{N}(\mathfrak{p}^r)}\Big)^{n-1} \Big(1 + \frac{n-1}{\mathfrak{N}(\mathfrak{p}^r)}\Big)\\
& \geq \Big(1 - \frac{n-1}{\mathfrak{N}(\mathfrak{p}^r)}\Big) \Big(1 + \frac{n-1}{\mathfrak{N}(\mathfrak{p}^r)}\Big)
\text{ by Bernoulli's inequality}\\
&= 1 - \Big(\frac{n-1}{\mathfrak{N}(\mathfrak{p}^r)}\Big)^2.
\end{align*}

\noindent Now, let $p_N$ denote that $N$-th rational prime, and let $R_N$ denote the error of $-\log{P_{n,k,r}}$ from truncating the series to summing over all prime ideals with norm at most equal to $p_N$. Then, by taking $N > n-1$ and thus $p_N > n-1$, we obtain
\begin{align*}
R_N &= \sum_{\mathfrak{N}(\mathfrak{p}) \geq p_{N+1}} \log\Big(\Big[\sum_{j=0}^{k-1} \binom{n}{j} \Big(1 - \frac{1}{\mathfrak{N}(\mathfrak{p}^r)}\Big)^{n-j} \frac{1}{\mathfrak{N}(\mathfrak{p}^r)^j} \Big]^{-1}\Big) \\
&\leq \sum_{\mathfrak{N}(\mathfrak{p}) \geq p_{N+1}} \log\Big(\Big[1 - \Big(\frac{n-1} {\mathfrak{N}(\mathfrak{p}^r)} \Big)^2 \Big]^{-1}\Big)\\
&= \sum_{\mathfrak{N}(\mathfrak{p}) \geq p_{N+1}} \log\Big(1 + \frac{(n - 1)^2} {\mathfrak{N}(\mathfrak{p}^r)^2
- (n-1)^2}\Big)\\
&\leq \sum_{\mathfrak{N}(\mathfrak{p}) \geq p_{N+1}} \frac{(n - 1)^2} {\mathfrak{N}(\mathfrak{p}^r)^2
- (n-1)^2}\\
&\leq d \sum_{j=N+1}^{\infty} \frac{(n-1)^2}{p_j^2 - (n-1)^2}.
\end{align*}

\noindent For the last line, we use the fact that there are at most $d = [K: \mathbb{Q}]$ prime ideals lying above a given rational prime. Next, since $p_j > 2j$ for all $j \geq 5$, we obtain
$$R_N \leq d \sum_{j=N+1}^{\infty} \frac{(n-1)^2}{(2j)^2 - (n-1)^2}.$$

\noindent Next, we rewrite this as a telescoping sum, obtaining
\begin{align*}
R_N &\leq \frac{d(n-1)}{2} \sum_{j=N+1}^{\infty} \Big(\frac{1}{2j-(n-1)} - \frac{1}{2j+(n-1)}\Big)\\
&= \frac{d(n-1)}{2}  \Big(\frac{1}{2N-n+3} + \frac{1}{2N-n+5} + \dots + \frac{1}{2N+n-1}\Big)\\
&\leq \frac{d(n-1)^2}{2(2N-n+3)}.
\end{align*}

\noindent Finally, to guarantee $t$ decimal point accuracy to $P_{n,k,r}$, we want $\displaystyle R_N \leq \frac{10^{-t}}{2}$. By using the work above, we find that $\displaystyle N \geq \frac{d(n-1)^2 \cdot 10^t + (n-3)}{2}$.

Using Python, we obtain approximate values for probabilities of ordered $n$-tuples of ideals from a few algebraic integer rings being pairwise relatively prime, rounded to the nearest ten-thousandth.

\begin{figure}[!htb]
\centering
\begin{tabular}{|l|c|c|c|c|c|}
\hline
 & $\mathbb{Z}$ & $\mathbb{Z}[\sqrt{2}]$ & $\mathbb{Z}[i]$ & $\mathbb{Z}[e^{2\pi i/3}]$ & $\mathbb{Z}[e^{2\pi i/5}]$\\
\hline
$n=2$ & 0.6079 & 0.6969 & 0.6637 & 0.7781  & 0.9155\\
$n=3$ & 0.2867 & 0.4066 & 0.3572 & 0.5151 & 0.7818\\
$n=4$ & 0.1149 & 0.2115 & 0.1676 & 0.3035 & 0.6307\\
\hline
\end{tabular}
\captionof{figure}{Sample pairwise relatively prime probabilities.}
\end{figure}

\end{document}